\newtheorem{theorem}{Theorem}[section]
\newtheorem{lemma}[theorem]{Lemma}
\newtheorem{proposition}[theorem]{Proposition}
\newtheorem{problem}[theorem]{Problem}
\theoremstyle{definition}
\newtheorem{definition}[theorem]{Definition}
\theoremstyle{remark}
\newtheorem{remark}[theorem]{Remark}
\numberwithin{equation}{section}
\newcommand*{\Steiner}{\mathrm{S}}
\newcommand*{\T}{\top}
\newcommand*{\ResSteiner}{\mathrm{RS}}
\newcommand*{\STS}{\mathrm{STS}}
\newcommand{\KRC}{\textbf{KRC}\xspace}
\newcommand*{\KNP}{\textbf{KNP}\xspace}
\newcommand*{\BBT}{\textbf{BBT}\xspace}
\newcommand*{\BB}{\textbf{BB}\xspace}
\title{New Steiner 2-designs from old ones by paramodifications}
\author{D\'avid Mez\H{o}fi}
\address{Bolyai Institute \\
University of Szeged \\
Aradi v\'ertan\'uk tere 1\\
H-6720 Szeged, Hungary}
\email{mezofi@math.u-szeged.hu}
\author{G\'abor P. Nagy}
\address{Department of Algebra \\
Budapest University of Technology and Economics\\
Egry J\'ozsef utca 1\\
H-1111 Budapest, Hungary}
\address{Bolyai Institute \\
University of Szeged \\
Aradi v\'ertan\'uk tere 1\\
H-6720 Szeged, Hungary}
\email{nagyg@math.bme.hu}
\thanks{Support provided from the National Research, Development and Innovation Fund of Hungary, financed under the 2018-1.2.1-NKP funding scheme, within the SETIT Project 2018-1.2.1-NKP-2018-00004. Partially supported by OTKA grants 119687 and 115288.}
\subjclass[2010]{05B05, 51E05}
\keywords{Combinatorial design, Steiner 2-design, abstract unital, paramodification, switching}
\date{30/04/2020}
\dedicatory{}
\begin{document}
\begin{abstract} 
Techniques of producing new combinatorial structures from old ones are commonly called trades. The switching principle applies for a broad class of designs: it is a local transformation that modifies two columns of the incidence matrix. In this paper, we present a construction, which is a generalization of the switching transform for the class of Steiner 2-designs. We call this construction paramodification of Steiner 2-designs, since it modifies the parallelism of a subsystem. We study in more detail the paramodifications of affine planes, Steiner triple systems, and abstract unitals. Computational results show that paramodification can construct many new unitals. 
\end{abstract}
\maketitle
\section{Introduction}
\label{sec:intro}

The triple $(\mathcal{P},\mathcal{B},I)$ is an \textit{incidence structure,} provided $\mathcal{P}$, $\mathcal{B}$ are disjoint sets, and $I\subseteq \mathcal{P} \times \mathcal{B}$. Using geometric language, one calls the elements of $\mathcal{P}$ points, the elements of $\mathcal{B}$ blocks, and writes $P\mathrel{I} b$ instead of $(P,b) \in I$. The incidence structure is \textit{simple,} if each block can be identified with the set of points with which it is incident. In this case, one can assume $I=\in$. For subsets $\mathcal{P}'\subseteq \mathcal{P}$ and $\mathcal{B}' \subseteq \mathcal{B}$ and $I'=I\cap (\mathcal{P}' \times \mathcal{B}')$, one has the \textit{incidence substructure} $\left( \mathcal{P}', \mathcal{B}', I' \right)$. By some abuse of notation, we may denote the latter by $\left( \mathcal{P}', \mathcal{B}', I \right)$ as well. The substructure \textit{induced by} $\mathcal{P}'\subseteq \mathcal{P}$ is defined with the set $\mathcal{B}'$ of blocks meeting $\mathcal{P}'$ in at least two points. Notice that for a substructure, a block $b\in \mathcal{B}'$ is not necessarily a subset of $\mathcal{P}'$. 

A $t$-$(n, k, \lambda)$ design, or equivalently a Steiner system $\Steiner_\lambda(t,k,n)$, is a finite simple incidence structure consisting of $n$ points and a number of blocks, such that every block is incident with $k$ points and every $t$-subset of points is incident with exactly $\lambda$ blocks. Let $\mathbf{D}=\left( \mathcal{P}, \mathcal{B}, I \right)$ be a Steiner system. The subset $\pi$ of blocks is called a \textit{parallel class,} or equivalently a \textit{$1$-factor} of $\mathbf{D}$ if it partitions the point set. If $\mathcal{B}$ is the union of disjoint $1$-factors $\pi_1,\ldots,\pi_r$, then the partition is called a \textit{$1$-factorization} and $\mathbf{D}$ is said to be \textit{resolvable.} A $1$-factorization is also called a \textit{parallelism} or a \textit{resolution.} A resolvable Steiner system $\Steiner_\lambda(t,k,n)$ is abbreviated as $\ResSteiner_{\lambda}(t,k,n)$. In general, the classification of combinatorial structures with a given set of parameters  is an old and important research topic; for details, we refer the reader to the monographs \cite{MR1192126,MR2192256,MR1729456}. Our main concern yields to designs with parameters $t=2$ and $\lambda=1$, which are called \textit{Steiner 2-designs} or \textit{linear spaces} in the literature, see \cite[Definition 2.4.9]{MR1192126}. Important classes of Steiner 2-designs are affine and projective planes of order $q$, Steiner triple systems, and abstract unitals of order $q$; the respective parameters $(n,k)$ are $(q^2,q)$, $(q^2+q+1,q+1)$, $(n,3)$ and $(q^3+1,q+1)$. 

The main result of this paper is a general construction which can produce new Steiner $2$-designs from old ones, with the same parameters. We call this construction \textit{paramodification} of $2$-designs, since it modifies the parallelism of a subsystem. Our research has been motivated by a construction of Grundh\"ofer, Stroppel and Van Maldeghem \cite{MR3533345}, which produced new abstract unitals with many translation centers, see also \cite{Moehler2020_1000117988}. As the anonymous reviewer of a previous version of this paper informed us, our construction is not completely new. In essence, Petrenjuk and Petrenjuk described it in technical reports of the University of Kirovograd (Ukraine) in the 1980s, see \cite{Petrenjuk} and its references. In particular, A. J. Petrenjuk used the method, named \textit{cut-transformations,} to construct new abstract unitals of order $3$. 

As shown in section \ref{sec:matrix}, a paramodification of a 2-$(n,k,1)$ design affects $k$ columns of the incidence matrix, all belonging to the $k$ points of a fixed block. We prove that paramodifications affecting exactly two columns are \textit{switches.} A switch or \textit{switching} is a local transformation of a combinatorial structure, which was studied for graphs, partial geometries, Steiner triples systems, codes, and other objects since the early 1980s. For the presentation of the switching principle, unification of earlier results and computational applications, see the excellent paper \cite{MR2854808} by \"Osterg\aa{}rd. In Proposition \ref{pr:noswitch}, we give a sufficient condition for a Steiner 2-design not to allow a switching. This condition implies that Hermitian unitals have no switchings, but they do have non-trivial paramodifications. 

In section \ref{sec:classes}, we study in more detail the paramodifications of affine planes, Steiner triple systems, and unitals. In the last two sections, we give an overview of the algorithmic and complexity aspects of the computation of the paramodification. We also present computational results which show that that paramodification can construct many new unitals. 

\section{Paramodification of $2$-designs}
\label{sec:paramod}

Let $\mathbf{D}=\left( \mathcal{P}, \mathcal{B}, I \right)$ be a
$t$-$(n,k,\lambda)$ design. By \cite[Theorem~1.9]{MR1729456}, the integer
\begin{equation} \label{eq:defr}
r=\lambda \frac{\binom{n-1}{t-1}}{\binom{k-1}{t-1}}=\frac{|\mathcal{B}|k}{n}
\end{equation}
is the number of blocks through a given point. The map $\chi:\mathcal{B} \to X$ is called a
\textit{proper block coloring} of $\mathbf{D}$, if for different blocks $b,b'$, $b\cap
b'\neq \emptyset$ implies $\chi(b)\neq \chi(b')$. If $|X|=m$ and $\mathbf{D}$
has a proper block coloring $\chi:\mathcal{B} \to X$ then we say that
$\mathbf{D}$ is \textit{block $m$-colorable.} 
\begin{lemma}\label{lem:nr_of_colors}
Let $\mathbf{D}=\left( \mathcal{P}, \mathcal{B}, I \right)$ be a
$t$-$(n,k,\lambda)$ design.
\begin{enumerate}[(i)]
\item Any proper block coloring of $\mathbf{D}$ needs at least $r$ colors. 
\item Any parallelism of $\mathbf{D}$ defines a block coloring with $r$ colors
when mapping each block to its parallel class. 
\item The color classes of a block coloring with $r$ colors form a parallelism
of $\mathbf{D}$. 
\item $\mathbf{D}$ is block $r$-colorable if and only if it is resolvable. 
\end{enumerate}
\end{lemma}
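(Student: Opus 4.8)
The plan is to prove the four statements essentially in the order (i), (ii), (iii), (iv), since (iv) will follow immediately by combining (ii) and (iii). Throughout I will use the quantity $r$ from equation \eqref{eq:defr}, the number of blocks through a given point, and the basic fact that in a $t$-$(n,k,\lambda)$ design with $t\ge 1$ every point lies on exactly $r$ blocks.

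For (i), I would fix a point $P\in\mathcal{P}$ and consider the $r$ blocks $b_1,\dots,b_r$ through $P$. Any two of them meet (at least in $P$), so a proper block coloring $\chi$ must assign them pairwise distinct colors; hence $|X|\ge r$. For (ii), let $\pi_1,\dots,\pi_r$ be a parallelism and define $\chi(b)$ to be the index of the parallel class containing $b$. Two blocks with the same color lie in the same $1$-factor $\pi_i$, which partitions $\mathcal{P}$, so they are disjoint; equivalently, intersecting blocks get different colors, so $\chi$ is proper, and it uses exactly $r$ colors because there are $r$ classes — here I should note that a parallelism necessarily has exactly $r$ classes, since each class is a partition of $\mathcal{P}$ into blocks of size $k$, hence has $n/k$ blocks, and counting incident point-block pairs gives the number of classes as $|\mathcal{B}|k/n = r$.

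For (iii), suppose $\chi:\mathcal{B}\to X$ is a proper block coloring with $|X|=r$. I claim each color class $C_x=\chi^{-1}(x)$ is a $1$-factor. Since blocks in $C_x$ are pairwise disjoint (properness) and each has size $k$, the class $C_x$ covers $k\cdot|C_x|$ points without repetition, so $|C_x|\le n/k$. The key step is to show equality, i.e. that each $C_x$ actually partitions $\mathcal{P}$. For this I fix a point $P$: among the $r$ blocks through $P$, any two meet, so they receive $r$ distinct colors, i.e. all $r$ colors occur among them. Thus for every color $x$, some block of color $x$ passes through $P$; since the blocks of $C_x$ are disjoint, exactly one does. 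Hence each color class covers every point exactly once, i.e. it is a $1$-factor, and the $r$ color classes partition $\mathcal{B}$ into $r$ disjoint $1$-factors — a parallelism. Finally, (iv): if $\mathbf{D}$ is resolvable then by (ii) it is block $r$-colorable; conversely a block $r$-coloring yields a parallelism by (iii), so $\mathbf{D}$ is resolvable.

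The only genuine obstacle is the counting argument in part (iii) showing that a color class of an $r$-coloring is not merely a partial parallel class but a full $1$-factor; everything else is a direct unwinding of definitions. The clean way to see it is the ``all $r$ colors appear through each point'' observation above, which upgrades the inequality $|C_x|\le n/k$ to the statement that $C_x$ meets every point, forcing $|C_x| = n/k$ and making $C_x$ a partition of $\mathcal{P}$. One should be mildly careful that this uses $t\ge 1$ (so that ``$r$ blocks through each point'' is meaningful and constant), which holds for all designs under consideration.
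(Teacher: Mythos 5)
Your proof is correct and follows essentially the same route as the paper: part (i) via the $r$ pairwise-intersecting blocks through a point, part (ii) directly from the definition of a parallelism, part (iii) by observing that with exactly $r$ colors every color occurs exactly once among the blocks through any given point, so each color class partitions $\mathcal{P}$, and part (iv) by combining (ii) and (iii). You simply spell out the counting details (e.g.\ that a parallelism has exactly $r$ classes) that the paper leaves implicit.
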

\begin{proof}
Since $r=|\mathcal{B}|k/n$ is the number of blocks through a point, and these blocks must
have different colors, we have (i). 
(ii) is trivial by definition. (iii) If we have $r$ colors, then for any point
$P$ and color $x$, there is a unique block on $P$ with color $x$. That is, the
color class $\chi^{-1}(x)$ is a partition of $\mathcal{P}$. (iv) follows from
(ii) and (iii). 
\end{proof}
From now on, $\mathbf{D}=\left( \mathcal{P}, \mathcal{B}, I \right)$ denotes a
Steiner 2-design on $n$ points. The incidence relation $I = \in$, that is, the blocks of $\mathbf{D}$ are subsets of size $k$ of $\mathcal{P}$. Notice that
for subsets $\mathcal{P}'\subseteq \mathcal{P}$ and $\mathcal{B}' \subseteq
\mathcal{B}$, we may consider the subsystem $\mathbf{D}'=\left( \mathcal{P}',
\mathcal{B}', I \right)$, even if an element $b'\in \mathcal{B}'$ is not a
subset of $\mathcal{P}'$. 

Fix a block $b \in \mathcal{B}$ and consider the subset 
\begin{equation}\label{eq:Cb}
C{\left( b \right)} = \left\{ b' \in \mathcal{B} \colon |b' \cap b| = 1
\right\}
\end{equation}
of blocks. We write $\mathbf{D}_b$ for the subsystem $(\mathcal{P}\setminus b,
C{(b)}, I)$. We define the map $\chi_b:C(b) \to b$ by
\begin{equation}\label{eq:chib}
\chi_b: b'\mapsto b'\cap b; 
\end{equation}
this is clearly a block coloring of $\mathbf{D}_b$. 
\begin{lemma}\label{lem:Db_resolv}
$\mathbf{D}_b$ is a resolvable $1$-$(n-k,k-1,k)$ design. %
\end{lemma}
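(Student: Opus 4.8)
The plan is to verify the three parameters of $\mathbf{D}_b$ one at a time and then exhibit an explicit resolution. The point set $\mathcal{P}\setminus b$ clearly has $n-k$ elements. For the block size, I would first note that, $\mathbf{D}$ being a Steiner $2$-design, the unique block meeting $b$ in two or more points is $b$ itself; hence every $b'\in C(b)$ meets $b$ in exactly one point, so $b'\setminus b$ has size $k-1$. (Incidentally this identifies $C(b)$ with the set of all blocks other than $b$ that intersect $b$.)

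For the replication number, fix a point $P\in\mathcal{P}\setminus b$. Each of the $k$ points $Q\in b$ is joined to $P$ by a unique block $b_Q$ of $\mathbf{D}$, and these $k$ blocks are pairwise distinct: if $b_{Q_1}=b_{Q_2}$ with $Q_1\neq Q_2$, that block would contain the pair $\{Q_1,Q_2\}$ and hence equal $b$, contradicting $P\notin b$. Each $b_Q$ meets $b$ in a single point, so $b_Q\in C(b)$, and conversely every block of $C(b)$ through $P$ is of this form. Thus $P$ lies on exactly $k$ blocks of $\mathbf{D}_b$, which shows $\mathbf{D}_b$ is a $1$-$(n-k,k-1,k)$ design.

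It remains to prove resolvability. The map $\chi_b$ of \eqref{eq:chib} is a proper block coloring of $\mathbf{D}_b$, as already observed, and it uses $|b|=k$ colors; by the replication count just made, $k$ is precisely the number of blocks of $\mathbf{D}_b$ through each point. Applying Lemma~\ref{lem:nr_of_colors}(iii) to the $1$-design $\mathbf{D}_b$ (where the parameter $r$ of that lemma equals $\lambda=k$), the color classes $\chi_b^{-1}(Q)$ for $Q\in b$ form a parallelism of $\mathbf{D}_b$, so $\mathbf{D}_b$ is resolvable.

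I do not anticipate a genuine obstacle here; the only places needing care are the two applications of the uniqueness of the block through a pair of points — once to see that the blocks in $C(b)$ meet $b$ in exactly one point, and once to see that the $k$ blocks through a fixed point of $\mathcal{P}\setminus b$ are distinct — together with invoking Lemma~\ref{lem:nr_of_colors} with the correct value $r=k$ for a $1$-design.
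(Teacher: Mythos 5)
Your proof is correct and follows essentially the same route as the paper: verify the $1$-$(n-k,k-1,k)$ parameters, observe that $\chi_b$ is a proper block coloring with $k$ colors where $k$ equals the replication number, and invoke Lemma~\ref{lem:nr_of_colors} to get resolvability. The only difference is that you count the $k$ blocks of $C(b)$ through a point of $\mathcal{P}\setminus b$ directly, whereas the paper reads $r=k$ off formula~\eqref{eq:defr}; this is a cosmetic distinction.
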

\begin{proof}
Trivially, each block $b'\in C{(b)}$ is incident with $k-1$ point $P\in
\mathcal{P}\setminus b$, that is, $\mathbf{D}_b$ is $1$-$(n-k,k-1,k)$ design.
In $\mathbf{D}_b$, \eqref{eq:defr} implies $r=k$ and the map $\chi_b:b'\mapsto
b'\cap b$ is a block coloring with $k$ colors. By Lemma \ref{lem:nr_of_colors},
$\mathbf{D}_b$ is resolvable. 
\end{proof}
We aim to show that any parallelism of $\mathbf{D}_b$ leads to a block
design $\mathbf{D}'$ such that $\mathbf{D}$ and $\mathbf{D}'$ have the same
parameters, and they may or may not be isomorphic. To use consistent
notation, we identify the notions of a parallelism and a block coloring
with $r$ colors. 
\begin{definition}
Let $\mathbf{D}=\left( \mathcal{P}, \mathcal{B}, I \right)$ be a Steiner $2$-$(n,k,1)$ design. Let $b\in \mathcal{B}$ be a block and $\chi\colon C{(b)}\to b$ a
block coloring of the subsystem $\mathbf{D}_b$ with $k$ colors. Define the
incidence relation $I^*\subseteq \mathcal{P} \times \mathcal{B}$ by
\begin{equation}\label{eq:incstar}
P \mathrel{I^*} b' \Leftrightarrow \begin{cases} P \mathrel{I} b', &\text{if
$b'\not\in C{(b)}$ or $P\mathrel{\diagup\!\!\!\!I} b$}\\ P=\chi(b'), &\text{if $P\mathrel{I} b$ and
$b'\in C{(b)}$.} \end{cases}
\end{equation}
We call the incidence structure 
\begin{equation*}
\mathbf{D}^*=\mathbf{D}^*_{\chi,b}=(\mathcal{P}, \mathcal{B}, I^*)
\end{equation*}
the \textit{$(\chi,b)$-paramodification} of $\mathbf{D}$. 
\end{definition}
\begin{theorem}
Let $\mathbf{D}=\left( \mathcal{P}, \mathcal{B}, I \right)$ be a Steiner $2$-$(n,k,1)$ design. Let $b\in \mathcal{B}$ be a block and $\chi:C{(b)}\to b$ a
block coloring of the subsystem $\mathbf{D}_b$ with $k$ colors. Then,
$\mathbf{D}^*_{\chi,b}$ is a Steiner $2$-design with the same parameters. 
\end{theorem}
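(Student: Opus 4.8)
The plan is to verify directly that $\mathbf{D}^*_{\chi,b} = (\mathcal{P},\mathcal{B},I^*)$ is a Steiner $2$-$(n,k,1)$ design, i.e.\ that every block of $I^*$ is incident with exactly $k$ points and every pair of points lies on exactly one block of $I^*$. The point count is essentially bookkeeping: blocks $b'\notin C(b)$ are untouched and keep their $k$ points; the block $b$ itself is untouched (since $b\notin C(b)$); and a block $b'\in C(b)$ loses its single point $b'\cap b$ on $b$ but gains the point $\chi(b')\in b$, so it still has $k$ points (it could happen that $\chi(b')=b'\cap b$, in which case nothing changes, but then the count is trivially preserved). The only subtlety is to note that the $k-1$ points of $b'$ outside $b$ are unaffected, which is immediate from the first case of \eqref{eq:incstar}.

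The heart of the proof is the pair-covering condition. I would split pairs $\{P,Q\}\subseteq\mathcal{P}$ into three cases according to how they meet $b$. Case 1: both $P,Q\in b$. Then the blocks of $I^*$ through both of them: a block $b'\notin C(b)$ contains $\{P,Q\}$ under $I^*$ iff it does under $I$, and the only such block with $|b'\cap b|\geq 2$ is $b$ itself; no block $b'\in C(b)$ can contain two points of $b$ under $I^*$ since it contains only $\chi(b')$. So $b$ is the unique block, exactly as in $\mathbf{D}$. Case 2: both $P,Q\notin b$. Then incidence with $P$ and $Q$ under $I^*$ agrees with incidence under $I$ for every block (the modified incidences only concern points of $b$), so the unique block of $\mathbf{D}$ through $\{P,Q\}$ is still the unique $I^*$-block through $\{P,Q\}$; one should just check this block is not $b$, which holds since $P\notin b$. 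Case 3: $P\in b$, $Q\notin b$. This is where resolvability of $\mathbf{D}_b$ — equivalently, that $\chi$ is a proper block coloring with $k$ colors — is used. An $I^*$-block through $\{P,Q\}$ must be some $b'\in C(b)$ with $Q\in b'$ and $\chi(b')=P$: indeed $b$ does not contain $Q$, and a block $b'\notin C(b)$ with $Q\in b'$ either misses $b$ (so does not contain $P$) or meets $b$ in $\geq 2$ points, impossible in a Steiner $2$-design unless $b'=b$. Since $\chi$ restricted to the blocks of $C(b)$ through $Q$ is a bijection onto $b$ (the color classes through a fixed point $Q$ partition nothing — rather, by Lemma~\ref{lem:nr_of_colors}(iii), $\chi^{-1}(P)$ is a parallel class of $\mathbf{D}_b$, hence contains exactly one block through $Q$), there is exactly one such $b'$. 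Hence $\{P,Q\}$ lies on a unique $I^*$-block.

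The main obstacle — really the only place anything nontrivial happens — is Case 3, and there the work is precisely to translate "$\chi$ is a block $k$-coloring of $\mathbf{D}_b$'' into "for each $P\in b$ and each $Q\notin b$ there is exactly one $b'\in C(b)$ with $Q\in b'$ and $\chi(b')=P$.'' This is exactly the content of Lemma~\ref{lem:nr_of_colors}(iii) applied to $\mathbf{D}_b$ (whose parameters give $r=k$ by Lemma~\ref{lem:Db_resolv}): the color class $\chi^{-1}(P)$ is a parallel class, so it partitions $\mathcal{P}\setminus b$, giving existence and uniqueness of the block through $Q$. Once this is in hand, the three cases assemble into the statement, and since the parameters $n$, $k$, $\lambda=1$ are visibly unchanged, $\mathbf{D}^*_{\chi,b}$ has the same parameters as $\mathbf{D}$. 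I would also remark that the construction is reversible — the original parallelism of $\mathbf{D}_b$ (given by $\chi_b$ of \eqref{eq:chib}) is one admissible choice, recovering $\mathbf{D}$ — but that is not needed for the theorem itself.
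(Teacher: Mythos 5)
Your proposal is correct and follows essentially the same route as the paper: the same three-way case split on how the pair meets $b$, with the crucial Case 3 resolved by invoking Lemma \ref{lem:nr_of_colors}(iii) (via Lemma \ref{lem:Db_resolv}) to see that $\chi^{-1}(P)$ is a parallel class of $\mathbf{D}_b$ and hence contains a unique block through $Q$. Your version is in fact slightly more complete than the paper's, since you also verify the block sizes and explicitly rule out blocks outside $C(b)$ in the mixed case.
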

\begin{proof}
We have to show that any two points are incident with a unique block of $\mathbf{D}^*=\mathbf{D}^*_{\chi,b}$. Let $P_{1}, P_{2} \in \mathcal{P}$ be distinct points, and $\beta \in \mathcal{B}$ the unique $\mathbf{D}$-block such that $P_{1} \mathrel{I} \beta$ and $P_{2} \mathrel{I} \beta$.
\begin{enumerate}
\item $P_{1}, P_{2} \not\in b$. Then $P_{1} \mathrel{I^{*}} \beta$ and $P_{2}
\mathrel{I^{*}} \beta$ by \eqref{eq:incstar}.  Let $\gamma \in \mathcal{B}$ be a
block such that $P_{1} \mathrel{I^{*}} \gamma$ and $P_{2} \mathrel{I^{*}}
\gamma$. Then $P_{1} \mathrel{I} \gamma$ and $P_{2} \mathrel{I} \gamma$ also
by \eqref{eq:incstar}, therefore $\gamma = \beta$ as $\mathbf{D}=\left(
\mathcal{P}, \mathcal{B}, I \right)$ is a Steiner $2$-$(n,k,1)$ design.
\item $P_{1}, P_{2} \in b$. Then $\beta = b$ as $\mathbf{D}$ is a Steiner
$2$-$(n,k,1)$ design. Note that $b \not\in C{\left( b \right)}$ by the
definition of $C{\left( b \right)}$ in \eqref{eq:Cb}, hence $P_{1}
\mathrel{I^{*}} b$ and $P_{2} \mathrel{I^{*}} b$. Let $\gamma \in \mathcal{B}$ be a block such that $P_{1} \mathrel{I^{*}} \gamma$
and $P_{2} \mathrel{I^{*}} \gamma$.  If $\gamma \not\in C{\left( b \right)}$,
then $P_{1} \mathrel{I} \gamma$ and $P_{2} \mathrel{I} \gamma$ by
\eqref{eq:incstar}, therefore $\gamma = b = \beta$. If $\gamma \in C{\left( b \right)}$, then by \eqref{eq:incstar}
\begin{equation*}
\chi{\left( \gamma \right)} = P_{1} \neq P_{2} = \chi{\left( \gamma \right)},
\end{equation*}
a contradiction.
\item $P_{1} \not\in b$ and $P_{2} \in b$. In this case, $\beta \in C{\left( b
\right)}$ and $P_2 \mathrel{I^*} \beta$ if and only if $\chi(\beta)=P_2$. By Lemma \ref{lem:nr_of_colors}, $\chi$ defines a parallelism, and the color class $\chi^{-1}(P_2)$ is a parallel class in $\mathbf{D}_b$. Hence, there is a unique block $\gamma \in C{(b)}$ such that $P_1 \mathrel{I} \gamma$ and $\chi(\gamma)=P_2$. Equation \eqref{eq:incstar} implies $P_1,P_2 \mathrel{I^*} \gamma$. 
\qedhere
\end{enumerate}
\end{proof}

In general, it is not easy to determine if two paramodifications of $\mathbf{D}$ are isomorphic. We introduce the following terminology. 

\begin{definition}
The block coloring $\chi_b:C(b)\to b$, $b'\mapsto b\cap b'$ is the \textit{trivial block coloring} of the Steiner 2-design $\mathbf{D}$. Two block colorings $\chi$ and $\psi$ of $C(b)$ are said to be \textit{equivalent} if they have the same color classes. The Steiner system $\mathbf{D}$ is said to be \textit{para-rigid} if, for any block $b$, all block colorings of $\mathbf{D}_b$ are equivalent to the trivial one. 
\end{definition}

\begin{remark}
\begin{enumerate}[(i)]
\item One has $\mathbf{D}=\mathbf{D}^*_{\chi_b,b}$.
\item The block colorings  $\chi$ and $\psi$ are equivalent if there is a permutation $\pi$ of the points on $b$ such that $\psi(b')=\pi(\chi(b'))$ holds for all $b'\in C(b)$. 
\item We claim that equivalent block colorings result isomorphic paramodifications. Indeed, we can extend $\pi$ to $\mathcal{P}$ such that $\pi(P)=P$ when $P\not\in b$. Then, $\pi$ determines an isomorphism between $\mathbf{D}^*_{\psi,b}$ and $\mathbf{D}^*_{\chi,b}$. 
\item If all paramodifications of the Steiner 2-design $\mathbf{D}$ are isomorphic to $\mathbf{D}$, then we say that the paramodifications of $\mathbf{D}$ do not yield new Steiner 2-designs. Paramodifications of a para-rigid Steiner 2-design do not yield new Steiner 2-designs. The converse is not valid; see Remark \ref{rem:ag2q}.
\end{enumerate}
\end{remark}

\section{Paramodification and the incidence matrix}
\label{sec:matrix}

In this section, we describe the effect of paramodifications to the incidence matrix. 
\begin{proposition}
Let $\mathbf{D}$ be a Steiner $2$-$(n,k,1)$ design and $\mathbf{D}^*=\mathbf{D}^*_{\chi,b}$ be a
$(\chi,b)$-paramodification of $\mathbf{D}$. Let $r=(n-1)/(k-1)$. Then, the
respective incidence matrices $\mathbf{M}$ and $\mathbf{M}^*$ differ at most in
a $k \times k(r-1)$ submatrix.
\end{proposition}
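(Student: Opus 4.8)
The plan is to read the claim off the defining formula \eqref{eq:incstar} and then count the blocks of $C(b)$.

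First I would note that \eqref{eq:incstar} alters the incidence relation only on pairs $(P,b')$ with $P\mathrel{I}b$ and $b'\in C(b)$: for every other pair the first branch of \eqref{eq:incstar} applies, giving $P\mathrel{I^*}b'\Leftrightarrow P\mathrel{I}b'$. Hence every entry of $\mathbf{M}^*$ that can differ from the corresponding entry of $\mathbf{M}$ lies in one of the $k$ rows indexed by the points of $b$ and in one of the $|C(b)|$ columns indexed by the blocks of $C(b)$. So it only remains to check that $|C(b)|=k(r-1)$.

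To count $|C(b)|$, I would fix a point $P\in b$. Exactly $r$ blocks of $\mathbf{D}$ pass through $P$ (by \eqref{eq:defr} with $t=2$, $\lambda=1$, so that $r=(n-1)/(k-1)$); one of these is $b$ itself, and every other block through $P$ meets $b$ in $P$ only, since two distinct blocks of a Steiner $2$-design share at most one point. Thus exactly $r-1$ blocks of $C(b)$ contain $P$. By the definition \eqref{eq:Cb}, each block of $C(b)$ contains exactly one point of $b$, so the $k$ sets of $r-1$ blocks obtained this way are pairwise disjoint and cover $C(b)$; therefore $|C(b)|=k(r-1)$. Combined with the previous paragraph, $\mathbf{M}$ and $\mathbf{M}^*$ coincide outside a $k\times k(r-1)$ submatrix.

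I do not expect a genuine obstacle here: the statement is bookkeeping on top of the definition, and the only mildly substantive ingredient is the standard count $|C(b)|=k(r-1)$, which rests solely on the value of $r$ and on the fact that two blocks of a linear space meet in at most one point. One could optionally make the description sharper by identifying the precise entries that change, namely those $(P,b')$ with $P\in\{\chi_b(b'),\chi(b')\}$ and $\chi_b(b')\neq\chi(b')$, but this refinement is not needed for the ``at most'' assertion.
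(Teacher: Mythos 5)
Your proof is correct and follows the same route as the paper: read off from \eqref{eq:incstar} that only the rows of points on $b$ and the columns of blocks in $C(b)$ can change, then count $|C(b)|=k(r-1)$. The paper states the count as ``clearly''; your explicit verification of it is a welcome addition but not a different argument.
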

\begin{proof}
Equation \eqref{eq:incstar} implies that the incidence matrices differ in the rows corresponding to the points of $b$, and in the columns corresponding to blocks in $C(b)$. Clearly, $|b|=k$ and $|C(b)|=k(r-1)$. 
\end{proof}
To have a more detailed description of the structure of the incidence matrices, consider the $n \times b$ incidence matrix $\mathbf{M}$ of the system $\mathbf{D}$ in the following way:
\begin{enumerate}
\item Let the first $k$ rows of $\mathbf{M}$ correspond to the points $P_{1},
  P_{2}, \ldots, P_{k} \in b$.
\item Let the first $r - 1$ columns of $\mathbf{M}$ correspond to the blocks in
  $C{\left( b \right)}$ incident with $P_{1}$, then let the second $r - 1$
  columns correspond to the blocks in $C{\left( b \right)}$ incident with
  $P_{2}$, and so on until $P_{k}$.
\item Right behind the columns corresponding to $C{\left( b \right)}$, put the
  column corresponding to $b$.
\item Then comes the rest of the blocks $\mathcal{B} \setminus \left( C{\left(
  b \right)} \cup b \right)$ in any order.
\end{enumerate}
The incidence matrix has the form
\begin{equation}\label{eq:incmatN}
\mathbf{M} =
\begin{pmatrix}
\mathbf{C}_{b}    & \mathbf{j}_{k}    & \mathbf{0}\\  
\mathbf{M}_{1} & \mathbf{0}_{n - k}& \mathbf{M}_{2}
\end{pmatrix},
\end{equation}
where 
\begin{equation*}
\mathbf{C}_{b} =
\begin{pmatrix}
\mathbf{j}^{\T} & \mathbf{0}^{\T} & \cdots  & \mathbf{0}^{\T}\\
\mathbf{0}^{\T} & \mathbf{j}^{\T} & \cdots  & \mathbf{0}^{\T}\\
\vdots          & \vdots          & \ddots  & \vdots\\
\mathbf{0}^{\T} & \mathbf{0}^{\T} & \cdots  & \mathbf{j}^{\T}\\
\end{pmatrix}
\end{equation*}
is a $k \times k\left( r - 1 \right)$ matrix, and $\mathbf{j}$, $\mathbf{0}$ are column vectors of length $r - 1$.

It is easy to see by the definition of $I^{*}$ in \eqref{eq:incstar}, that the
incidence matrix $\mathbf{M}^{*}$ of the new system $\mathbf{D}^{*}$ has the
form
\begin{equation*}
\mathbf{M}^{*} =
\begin{pmatrix}
\mathbf{C}^{*}_{b}    & \mathbf{j}_{k}    & \mathbf{0}\\  
\mathbf{M}_{1} & \mathbf{0}_{n - k}& \mathbf{M}_{2}
\end{pmatrix},
\end{equation*}
where except $\mathbf{C}^{*}_{b}$ all the other submatrices are the same as in
\eqref{eq:incmatN}. Hence $\mathbf{M}$ and $\mathbf{M}^{*}$ differ at most in a
$k \times k\left( r - 1 \right)$ submatrix. Finally, we notice that equivalent block colorings correspond to the permutations of the first $k$ rows of $\mathbf{M}$. 

In \cite{MR2854808}, the author defines the switching operation for constant weight codes as a transformation that concerns exactly two coordinates and keeps the studied parameter of the code unchanged. For a design $\mathbf{D}$, this means that the incidence matrix is modified in exactly two rows. As the number of 1s is constant in each column, one can interchange the 01 and 10 combinations of the two rows only. This implies the following proposition:

\begin{proposition}
Let $P,Q$ be two points of the Steiner 2-design $\mathbf{D}$. Let $b$ be the unique block on $P$ and $Q$. A switching with respect to $P$ and $Q$ is a $(\chi,b)$-paramodification. Moreover, if the block $b'\in C(b)$ is not incident with $P$ or $Q$, then it has trivial color: $\chi(b')=b\cap b'$. Conversely, a $(\chi,b)$-paramodification is a switching if and only if precisely two color classes of $\chi$ are non-trivial. \qed
\end{proposition}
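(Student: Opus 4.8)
The plan is to translate both operations into the language of the incidence matrix and match them there. Fix the points $P,Q$ and the block $b$ through them. In a Steiner $2$-design every block $b'\neq b$ meets $b$ in $0$ or $1$ points, the blocks meeting $b$ in exactly one point are precisely those of $C(b)$, and $\chi_b$ classifies them by that point; moreover every color class of any parallelism of $\mathbf D_b$, and in particular every class $\chi_b^{-1}(S)$, consists of exactly $r-1$ blocks (Lemmas~\ref{lem:nr_of_colors} and~\ref{lem:Db_resolv}). Consequently, if we single out the two rows of $\mathbf M$ indexed by $P$ and $Q$, a column $b'$ shows the pattern $11$ exactly when $b'=b$, the pattern $01$ or $10$ exactly when $b'\in\chi_b^{-1}(P)\cup\chi_b^{-1}(Q)$, and $00$ otherwise. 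I will use this dictionary throughout.

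First I would treat the direct statement. A switching with respect to $P,Q$ modifies only the two rows $P,Q$ and, since every column sum is fixed, only by turning some $01$-columns into $10$-columns and conversely; by the dictionary these columns lie in $\chi_b^{-1}(P)\cup\chi_b^{-1}(Q)$. Let $\mathbf D'$ be the resulting design and define $\chi\colon C(b)\to b$ by letting $\chi(b')$ be the unique point of $b$ incident with $b'$ in $\mathbf D'$; this is well defined because flipping a column only moves its single $1$ between rows $P$ and $Q$ without creating a second one. Then $\chi$ coincides with $\chi_b$ on every block of $C(b)$ that is not flipped, and sends the flipped blocks from $P$ to $Q$ or from $Q$ to $P$; in particular a block of $C(b)$ incident with neither $P$ nor $Q$ sits in a $00$-column, is not flipped, and has $\chi(b')=b\cap b'$, which is the ``moreover'' clause. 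A direct comparison of \eqref{eq:incstar} with the effect of the switch gives $\mathbf D'=\mathbf D^*_{\chi,b}$; since $\mathbf D'$ is a Steiner $2$-design with the same parameters, arguing as in the proof of Lemma~\ref{lem:Db_resolv} applied to $\mathbf D'$ (or a direct check) shows $\chi$ is a proper block coloring of $\mathbf D_b$ with $k$ colors, so $\mathbf D^*_{\chi,b}$ is a genuine paramodification.

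For the equivalence, put $N=\{b'\in C(b):\chi(b')\neq b\cap b'\}$, the set of blocks of non-trivial color. Reading off \eqref{eq:incstar}, $\mathbf M$ and $\mathbf M^*$ differ exactly in that, for each $b'\in N$, column $b'$ loses its $1$ in row $b\cap b'$ and gains a $1$ in row $\chi(b')$; hence the set of rows where $\mathbf M$ and $\mathbf M^*$ disagree is $R=\{b\cap b':b'\in N\}\cup\{\chi(b'):b'\in N\}$. The crux is that a color $S\in b$ has a non-trivial class, i.e. $\chi^{-1}(S)\neq\chi_b^{-1}(S)$, precisely when $S\in R$: if $S=b\cap b'$ for some $b'\in N$ then $b'\in\chi_b^{-1}(S)\setminus\chi^{-1}(S)$, if $S=\chi(b')$ for some $b'\in N$ then $b'\in\chi^{-1}(S)\setminus\chi_b^{-1}(S)$, and conversely any block witnessing $\chi^{-1}(S)\neq\chi_b^{-1}(S)$ --- it lies in one of the two equal-sized sets but not the other --- belongs to $N$ and forces $S\in R$. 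Therefore ``precisely two color classes are non-trivial'' means $|R|=2$, i.e. $\mathbf M$ and $\mathbf M^*$ differ in exactly two rows. If $R=\{P,Q\}$, then each $b'\in N$ has $b\cap b'\neq\chi(b')$ both lying in $\{P,Q\}$, hence $\{b\cap b',\chi(b')\}=\{P,Q\}$, so $\mathbf M^*$ is obtained from $\mathbf M$ by interchanging $01$- and $10$-patterns in rows $P,Q$ only: exactly a switching with respect to $P$ and $Q$, whose common block is $b$. Conversely a (nontrivial) switching changes exactly two rows, so $|R|=2$ and exactly two classes are non-trivial; the degenerate case $N=\emptyset$ gives $\mathbf D^*_{\chi,b}=\mathbf D$ with no non-trivial class, consistently with the statement.

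The genuinely routine parts are the bookkeeping with \eqref{eq:incstar} and the verification that the $\chi$ built from a switch is a legitimate block coloring. The only step needing a little care is the ``crux'' above: it uses that every color class of a parallelism of $\mathbf D_b$ has the same size $r-1$, so that $\chi^{-1}(S)\neq\chi_b^{-1}(S)$ cannot happen by mere inclusion and genuinely reflects a row of the incidence matrix being touched; without this one could not recover $R$ from the list of non-trivial classes.
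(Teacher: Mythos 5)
Your proof is correct and follows the same route the paper takes: the paper disposes of this proposition by observing that a switch touches exactly two rows of the incidence matrix and, column sums being constant, can only interchange the $01$ and $10$ patterns there, which forces the affected columns into $\chi_b^{-1}(P)\cup\chi_b^{-1}(Q)$; you have simply written out in full the bookkeeping (the induced coloring $\chi$, its properness, and the bijection between non-trivial color classes and modified rows) that the authors leave implicit. One small remark: the equal-size-of-color-classes fact you flag as essential at the end is not actually needed for the ``crux'', since $\chi^{-1}(S)\neq\chi_b^{-1}(S)$ already yields a block in the symmetric difference, which lands in $N$ and witnesses $S\in R$ regardless of cardinalities.
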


In a Steiner $2$-design, a \textit{Pasch configuration} consists of six points $P_1,\ldots,P_6$ such that the triples $\{P_1,P_3,P_4\}$, $\{P_1,P_5,P_6\}$, $\{P_2,P_3,P_5\}$, $\{P_2,P_4,P_6\}$ are collinear. The design is \textit{anti-Pasch} if it does not contain any Pasch configuration. Pasch configurations are known to play an important role in switches of Steiner 2-designs. 

\begin{proposition} \label{pr:noswitch}
Let $\mathbf{D}$ be an anti-Pasch $2$-$(n,k,1)$ design. If 
\[n<2{k}^{3}-8{k}^{2}+13k-6,\]
then no switching can be carried out for $\mathbf{D}$. 
\end{proposition}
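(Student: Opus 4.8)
The plan is to show that if a switching exists for $\mathbf{D}$, then $\mathbf{D}$ must contain a Pasch configuration, contradicting the anti-Pasch hypothesis, provided $n$ is small enough. Suppose a switching with respect to two points $P, Q$ can be performed, and let $b$ be the unique block through $P$ and $Q$. By the previous proposition, this switching is a $(\chi, b)$-paramodification in which precisely two color classes of $\chi$ are non-trivial, namely those belonging to the colors $P$ and $Q$. Concretely, there exist blocks $b_1 \in C(b)$ with $b_1 \cap b = \{P\}$ and $b_2 \in C(b)$ with $b_2 \cap b = \{Q\}$ such that the switching sends $\chi_b(b_1) = P \mapsto Q$ and $\chi_b(b_2) = Q \mapsto P$, while leaving all other colors fixed. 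For this to be a proper block coloring of $\mathbf{D}_b$ — i.e.\ a parallelism — the blocks whose colors are swapped must be disjoint in $\mathcal{P} \setminus b$; in particular $b_1$ and $b_2$ must be disjoint outside $b$, and since $b_1 \cap b = \{P\}$, $b_2 \cap b = \{Q\}$, we get $b_1 \cap b_2 = \emptyset$.

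The next step is to count. For a switching to be non-trivial, we cannot have just the single pair $(b_1, b_2)$ with the remaining blocks of $C(b)$ all retaining their colors, because the color class of $P$ in $\mathbf{D}_b$ is a parallel class (partition of $\mathcal{P} \setminus b$ into $k-1$-sets): removing $b_1$ from the $P$-class and inserting $b_2$ forces us to also remove $b_2$ from the $Q$-class and insert $b_1$, and these are the only changes only if $b_1$ and $b_2$ cover exactly the same point set in $\mathcal{P} \setminus b$ — impossible, since disjoint $(k-1)$-subsets of $\mathcal{P} \setminus b$ cannot coincide. So in fact the switching must move a whole family of blocks: there are blocks $b_1^{(1)}, \ldots, b_1^{(s)}$ of color $P$ sent to color $Q$ and blocks $b_2^{(1)}, \ldots, b_2^{(s)}$ of color $Q$ sent to color $P$, with $s \ge 2$, such that the two families cover the same union of points in $\mathcal{P} \setminus b$. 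Pick two distinct blocks from the $P$-family, say $b_1, b_1'$, and two from the $Q$-family, $b_2, b_2'$; by the covering condition one can arrange that $b_1$ meets $b_2$ in a point $X \in \mathcal{P}\setminus b$ and meets $b_2'$ in a point $Y$, etc. The points $P, Q$ together with the intersection points of these four blocks then form the vertices of a Pasch configuration: the triples $\{P, X, \ldots\}$, $\{P, Y, \ldots\}$, $\{Q, X, \ldots\}$, $\{Q, Y, \ldots\}$ lie on $b_1, b_1', b_2, b_2'$ respectively, matching the combinatorial pattern in the definition of a Pasch configuration.

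The role of the hypothesis $n < 2k^3 - 8k^2 + 13k - 6$ is to guarantee that this forced Pasch configuration actually occurs, i.e.\ that the swapped families are large enough and the blocks short enough that a genuine Pasch pattern is unavoidable rather than being evaded by the blocks meeting only on $b$. Concretely, one estimates: the color class of $P$ has $r - 1 = (n-1)/(k-1) - 1 = (n-k)/(k-1)$ blocks, and the union of the moved $P$-blocks and moved $Q$-blocks lies in $\mathcal{P} \setminus b$, which has $n - k$ points. Using that $\mathbf{D}$ is anti-Pasch, each block $b_2^{(j)}$ of the $Q$-family can meet the blocks of the $P$-family in a controlled number of points (two blocks through a common point can share only that point, and anti-Pasch limits how the $(k-1)$-sets can interlock), and a double count of incidences $\{(\text{point of }\mathcal{P}\setminus b, \text{block of }P\text{-family})\}$ versus the same for the $Q$-family leads to the bound: if $n - k$ were at least $2k^3 - 8k^2 + 13k - 6 - k$ or so, the families could in principle be laid out Pasch-free, but below that threshold the pigeonhole forces a repeated intersection pattern that is exactly a Pasch configuration.

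The main obstacle I anticipate is the last counting step: pinning down precisely which intersection pattern among the four-or-more moved blocks is forced, and verifying that it is the Pasch pattern (and not, say, a larger "grid" that the anti-Pasch condition already excludes for a different reason). One must be careful that the two moved families genuinely cover the \emph{same} point set — this is the parallelism constraint — and then extract from that a sub-configuration on six points; getting the cubic bound $2k^3 - 8k^2 + 13k - 6$ exactly right will require tracking the worst case where the moved families are as small as possible ($s = 2$, four blocks) against the case where they are forced large, and the constants will come out of bounding the number of points covered by $s$ mutually disjoint $(k-1)$-blocks together with the anti-Pasch restriction on pairwise intersections of $P$-blocks with $Q$-blocks. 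I would set up the double count of flags carefully, isolate the extremal configuration, and then read off the inequality.
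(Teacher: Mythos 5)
Your high-level strategy (a switching in an anti-Pasch design with $n$ below the stated bound must produce a Pasch configuration) is the right one, but the proof is not actually carried out: the entire quantitative part --- the step that produces the specific bound $n<2k^3-8k^2+13k-6$ --- is only described as ``a double count of incidences \dots leads to the bound,'' and you explicitly defer it as the anticipated obstacle. That count \emph{is} the proposition. Moreover, two of your intermediate claims are off. First, $b_1\cap b_2=\emptyset$ is not forced: the properness of the new coloring only constrains blocks receiving the \emph{same} colour to be disjoint off $b$; $b_1$ (recoloured to $Q$) and $b_2$ (recoloured to $P$) get different colours and may well meet outside $b$ --- indeed the useful constraint is the opposite one, namely that a block keeping colour $Q$ must be disjoint off $b$ from any block recoloured to $Q$. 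Second, extracting a Pasch configuration from four moved blocks $b_1,b_1',b_2,b_2'$ requires all four cross-intersections $b_1^{(i)}\cap b_2^{(j)}$ to be nonempty and to lie outside $b$; the ``same point set'' covering condition does not give this for general $k$ (for $k>3$ each moved $P$-block must in fact meet $k-1$ distinct moved $Q$-blocks, so the moved families have size at least $k-1$, not $2$, and the six-point pattern does not fall out automatically).

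For comparison, the paper closes the argument differently and more cleanly: write the blocks of $C(b)$ through the two switched points $Q,R$ in two colours (red/blue) so that same-coloured blocks meet only on $b$; since the switching is non-trivial both colours occur at $Q$, and one may assume at least half of the $r-1$ blocks at $Q$ are red. Take a blue block $a=\{Q,A_1,\dots,A_{k-1}\}$; each block $RA_i$ meets $a$ at $A_i\notin b$, hence is red. If two points $P_{is}\in RA_i$, $P_{jt}\in RA_j$ ($i\neq j$) were collinear with $Q$, then $Q,R,A_i,A_j,P_{is},P_{jt}$ would form a Pasch configuration; so anti-Pasch forces the $(k-1)(k-2)$ blocks $QP_{is}$ to be pairwise distinct, and each is blue because it meets a red block off $b$. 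Together with $a$ this gives at least $(k-1)(k-2)+1$ blue blocks at $Q$, contradicting $(k-1)(k-2)+1>\tfrac12(r-1)$, which with $r=(n-1)/(k-1)$ is exactly the hypothesis on $n$. Note that the Pasch configuration is used as an \emph{obstruction} that forces many blocks of one colour through a single point; it is not something one tries to exhibit directly inside the moved families. You would need to supply an argument of this kind (or complete your own double count in full detail) before the proposal can be considered a proof.
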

\begin{proof}
Each point is incident with $r=(n-1)/(k-1)$ blocks, and the condition is
\[(k-1)(k-2)+1>\frac{1}{2}(r-1).\]
Assume that a switching can be carried out with respect to the points $R,Q$. Let $C(Q,R)$ be the set of blocks containing precisely one of $Q$ and $R$. The $2(r-1)$ blocks are colored with two colors, say red and blue such that blocks with the same color intersect in $Q$ or $R$. As the switching is non-trivial, there are both red and blue blocks on $Q$. We can assume that at least half of the blocks on $Q$ are red. Let $a$ be a blue block on $Q$, incident with the points $Q,A_1,\ldots,A_{k-1}$. For each $i\in \{1,\ldots,k-1\}$, the block $RA_i$ is all red; let $R,A_i,P_{i1},\ldots,P_{i,k-2}$ be its points. If the points $Q,P_{is}, P_{jt}$ are collinear with $i\neq j$, then the six points $Q,R,A_i,A_j,P_{is},P_{jt}$ form a Pasch configuration. Hence, the blocks $QP_{is}$ are different for all $i\in \{1,\ldots,k-1\}$ and $s\in\{1,\ldots,k-2\}$. Moreover, $QP_{is}$ is blue since it meets the red $RA_i$. This shows that there are at least $(k-1)(k-2)+1$ blue blocks on $Q$, a contradiction. 
\end{proof}

\section{Paramodification for classes of $2$-designs}
\label{sec:classes}

In this section, we discuss the paramodification of certain well-known classes
of Steiner $2$-designs. 

\subsection{Projective and affine planes} 
The case of a finite projective plane is trivial. While the case of a finite affine plane is easy, we are not aware of any occurrence of this construction in the literature, and we give a detailed proof. 
\begin{proposition}
\begin{enumerate}[(i)]
\item Paramodifications of a finite projective plane are isomorphic. In other words, finite projective planes are para-rigid. 
\item Paramodifications of a finite affine plane are associated with the same projective plane. 
\end{enumerate}
\end{proposition}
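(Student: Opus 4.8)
The plan is to handle the two statements separately, starting with the projective plane case, which should follow quickly from the block-coloring reformulation, and then treating the affine case in more detail.

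\medskip

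\noindent\textbf{Projective planes.}
Let $\mathbf{D}$ be a projective plane of order $q$, so $k = q+1$ and $n = q^2+q+1$. Fix a block (line) $b$. Every other line meets $b$ in exactly one point, so $C(b) = \mathcal{B} \setminus \{b\}$, and each point of $b$ lies on exactly $r-1 = q$ lines of $C(b)$. First I would observe that the trivial coloring $\chi_b$ has color classes that are precisely the pencils of lines through the points of $b$ (minus $b$ itself), and these pencils are the only parallel classes of $\mathbf{D}_b$: indeed $\mathbf{D}_b$ is a $1$-$(q^2, q, q+1)$ design whose point set is the affine plane obtained by deleting $b$, and a parallel class partitions these $q^2$ affine points into $q$ lines, i.e.\ is a parallel class of the affine plane $\mathbf{D}\setminus b$; but in the affine plane of order $q$ the parallel classes are exactly the $q+1$ pencils through the $q+1$ points of $b$ at infinity. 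Hence every block coloring of $\mathbf{D}_b$ with $r$ colors is equivalent to the trivial one, so $\mathbf{D}$ is para-rigid. This gives (i), and statement (ii) for the projective plane is vacuous. The one point needing care is the identification of parallel classes of $\mathbf{D}_b$ with parallel classes of the affine plane $\mathbf{D}\setminus b$, i.e.\ checking that no parallel class of $\mathbf{D}_b$ can ``mix'' lines from different pencils — this is exactly the statement that a spread of the affine plane into $q$ disjoint lines must consist of mutually parallel lines, which is immediate since two non-parallel affine lines meet.

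\medskip

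\noindent\textbf{Affine planes.}
Let $\mathbf{A}$ be an affine plane of order $q$, with its standard parallelism, and let $\overline{\mathbf{A}}$ be its projective completion, adding a line $\ell_\infty$ with points corresponding to the parallel classes. Here $k = q$, $n = q^2$, $r = q+1$. Fix a line $b$ of $\mathbf{A}$; it contains $q$ points, and $b$ lies in one parallel class, call its point at infinity $Q_\infty$. The blocks in $C(b)$ are the lines of $\mathbf{A}$ meeting $b$ in exactly one point, i.e.\ all lines of $\mathbf{A}$ not parallel to $b$ and not equal to $b$; there are $q(q+1) - q \cdot 1 \cdot$-correction $= q^2-1 = q(r-1)$ of them, matching the count. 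The key structural remark is that $\mathbf{D}_b = (\mathcal{P}\setminus b, C(b), I)$, together with the $q$ ``missing'' parallel classes of $\mathbf{A}$ that are not the class of $b$, is naturally the affine plane $\mathbf{A}$ with one parallel class removed; a block coloring of $\mathbf{D}_b$ with $r = q+1$ colors is the same as choosing, for each color, a parallel class of $\mathbf{D}_b$, and these parallel classes partition $C(b)$. So I would first show: the parallel classes of $\mathbf{D}_b$ are in bijection with the pencils of lines through the $q$ points of $b$ together with the original parallel classes of $\mathbf{A}$ other than the class of $b$ — no, more precisely, a parallel class of $\mathbf{D}_b$ is a set of $q-1$ lines of $C(b)$ covering each of the $q^2 - q$ points of $\mathcal{P}\setminus b$ exactly once. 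I claim such a set is either one of the $q$ original parallel classes of $\mathbf{A}$ distinct from $[b]$ (restricted, all those lines meet $b$ in one point so lie in $C(b)$), or a pencil of $q-1$ lines through a fixed point $P_i \in b$ — wait, a pencil through $P_i$ has $q+1$ lines, one of which is $b$ and one of which is parallel to... Let me instead argue: such a parallel class $\pi$, thought of as lines of $\mathbf{A}$, covers $\mathcal{P}\setminus b$; adding back $b$ need not give a cover. The clean statement to prove is: the parallel classes of $\mathbf{D}_b$ are exactly (a) the $q$ parallel classes of $\mathbf{A}$ other than $[b]$, and (b) for each point $P_i$ of $b$ and each point $R_\infty \in \ell_\infty \setminus \{Q_\infty\}$ of the projective completion, the set of $q-1$ affine lines through $P_i$ whose slope is not $R_\infty$ — no. I will sort this out by working projectively: in $\overline{\mathbf{A}}$, the lines of $C(b)$ correspond to the projective lines $\neq \ell_\infty$ that meet the affine line $b$ but not at $Q_\infty$; a parallel class of $\mathbf{D}_b$ is then a set of such projective lines covering the $q^2-q$ affine points off $b$ exactly once. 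The main obstacle is proving the classification of these parallel classes; once that is done, the rest is bookkeeping.

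\medskip

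Granting the classification, here is the conclusion. Given a block coloring $\chi$ of $\mathbf{D}_b$ with $q+1$ colors, each color class is a parallel class of $\mathbf{D}_b$ of one of the above types. I would then directly read off the paramodified structure $\mathbf{D}^*_{\chi,b}$: each line $b'\in C(b)$ of color $x$ now passes through the point $\chi(b') \in b$ rather than its old point $b\cap b'$, while all lines disjoint from $b$ (the class $[b]\setminus\{b\}$) and $b$ itself are unchanged. The claim is that $\mathbf{D}^*_{\chi,b}$ is again an affine plane and its projective completion is $\overline{\mathbf{A}}$: the point at infinity of a paramodified line is determined by which parallel class of $\mathbf{A}\setminus b$-lines (type (a)) or which pencil (type (b)) its color class corresponds to, and these data are precisely the data of a point of $\ell_\infty$ together with the incidence $Q_\infty \in b^*$ — so the ``line at infinity'' of $\mathbf{D}^*$ is again $\ell_\infty$ with the same incidence as in $\overline{\mathbf{A}}$, hence $\overline{\mathbf{D}^*} = \overline{\mathbf{A}}$. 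I expect the projective-completion argument to be the cleanest way to package this, so that (ii) reduces to: the paramodification only rearranges which affine point at infinity each line acquires, without changing the projective plane. The principal obstacle remains the combinatorial classification of parallel classes of $\mathbf{D}_b$ for the affine plane — showing there are no ``exotic'' parallel classes beyond the two natural families — and I would prove it by a counting/intersection argument: a parallel class of $\mathbf{D}_b$ using two lines through distinct points $P_i \neq P_j$ of $b$ that are not themselves parallel would force those two lines to meet off $b$, contradicting disjointness within the class, so a parallel class either uses lines through a single point of $b$ (type (b)) or lines forming a parallel family of $\mathbf{A}$ (type (a)); a short degree count then pins down the sizes.
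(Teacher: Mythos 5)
Your part (i) is correct and is essentially the paper's argument: for a projective plane, $\mathbf{D}_b$ is the affine plane $\mathbf{D}\setminus b$, whose unique resolution consists of the pencils through the points of $b$, so every proper block coloring with the minimal number of colors is equivalent to the trivial one.

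For part (ii) there are two genuine problems. First, the numbers: a proper block coloring of $\mathbf{D}_b$ uses $k=q$ colors (its codomain is $b$, and by Lemma \ref{lem:Db_resolv} the replication number of $\mathbf{D}_b$ is $k$), not $r=q+1$ as you state; correspondingly $|C(b)|=k(r-1)=q^2$ (not $q^2-1$), and each color class consists of $q$ lines of $C(b)$ (not $q-1$), since it partitions the $q^2-q$ points off $b$ into sets of size $q-1$. Your slips are internally consistent with one another, but they matter at the very end, where the embedding of $b$ into $\ell_\infty$ rests on a bijection between the $q$ colors (points of $b$) and the $q$ parallel classes of $\mathbf{A}$ other than $[b]$. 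Second, and more substantively: your classification correctly shows that each \emph{individual} color class is either a pencil through a point of $b$ (minus $b$) or a parallel class of $\mathbf{A}$ distinct from $[b]$, but your concluding ``read off the point at infinity'' step tacitly assumes a single coloring cannot mix the two types. If a coloring had one pencil-type class and one parallel-type class, the natural image of $b$ would contain both an affine point and a point at infinity and so would not lie on a line of $\Pi$. You need the additional observation, which the paper proves in a different packaging: a pencil through $P_i\in b$ and a parallel class $\pi\neq[b]$ always share the unique line of $\pi$ through $P_i$, so they cannot occur as distinct (hence disjoint) color classes of the same coloring. Consequently every coloring is either entirely pencil-type (equivalent to the trivial coloring, $\mathbf{D}^*_{\chi,b}\cong\mathbf{D}$) or entirely parallel-type, and in the latter case $\varepsilon(P)=$ the infinite point of $\chi^{-1}(P)$ for $P\in b$ and $\varepsilon=\mathrm{id}$ off $b$ embeds $\mathbf{D}^*_{\chi,b}$ into $\Pi$. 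With these two repairs your argument closes and coincides in substance with the paper's proof, which reaches the same dichotomy by showing that as soon as one color class contains two lines meeting off $b$, every color class must be a parallel class of $\mathbf{A}$.
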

\begin{proof}
(i) Let $\mathbf{D}$ be a projective plane of order $q$. For any line $b$, $\mathbf{D}_b$ is an affine plane of order $q$ with a unique parallelism. Hence, the proper block colorings of $C(b)$ are equivalent, and the corresponding paramodifications are isomorphic. 

(ii) Let $\mathbf{D} = \left( \mathcal{P}, \mathcal{B}, I \right)$ be an affine plane of order $q$. $\mathbf{D}$ can be embedded in a projective plane $\Pi = \left( \bar{\mathcal{P}}, \bar{\mathcal{B}}, \bar{I} \right)$ of order $q$, and $\Pi$ is unique up to isomorphism. We show that any paramodification $\mathbf{D}^*_{\chi,b}$ of $\mathbf{D}$ can be embedded in $\Pi$. This is obvious if $\chi$ and $\chi_b$ are equivalent. From now on, we assume that this is not the case, that is, there are distinct lines $\ell_1,\ell_2\in C(b)$ such that $\chi(\ell_1)=\chi(\ell_2)$ and $\ell_1\cap \ell_2 \not\in b$. Not meeting on $b$ and being disjoint off $b$, the lines $\ell_1,\ell_2$ must be parallel in $\mathbf{D}$. Take a third line $\ell_3\in C(b)$ in the same color class, $\ell_3\neq \ell_1,\ell_2$. At least one of $\ell_1\cap \ell_3$, $\ell_2\cap \ell_3$ does not lie on $b$, we must have $\ell_1\| \ell_2 \|\ell_3$. Being of the same size $q$, the color class of $\ell_1$ coincides with its parallel class. 

We claim that any color class $\kappa$ of $\chi$ is a parallel class of $\mathbf{D}$. To show this, it suffices to find two lines $m_1,m_2\in \kappa$ such that $m_1\cap m_2\not\in b$. Then, the argument above proves that $\kappa$ is indeed a parallel class. Fix $m_1\in \kappa$ and define $Q=m_1\cap b$. Let $\ell$ be the unique line which is parallel to $\ell_1$ and incident with $Q$. Then $\ell\not\in \kappa$, and therefore $\kappa$ has a line $m_2$ with is not incident with $Q$. Hence, $m_1\cap m_2\not\in b$, and the claim follows. 

Let $\ell_\infty$ be the line at infinity with respect to $\mathbf{D}$ in $\Pi$. For the (affine) point $P\in b$, let $\varepsilon(P)$ be the infinite point of the parallel class $\chi^{-1}(P)$. For $P\in \mathcal{P}\setminus b$, we put $\varepsilon(P)=P$. It is straightforward to show that $\varepsilon$ is an embedding of $\mathbf{D}^*_{\chi,b}$ in $\Pi$, which finishes the proof. 
\end{proof}

\begin{remark}\label{rem:ag2q}
Let $\mathbf{D}$ be a finite Desarguesian affine plane. While $\mathbf{D}$ is not para-rigid, it is isomorphic to any of its paramodifications. 
\end{remark}

\subsection{Steiner triple systems}
A Steiner triple system $\STS{(n)}$ is a $2$-$(n,3,1)$ design; an $\STS{(n)}$ exists if and only if $n\equiv 1,3 \pmod{6}$. Steiner triples systems, cubic graphs (regular graphs of degree $3$), and edge colorings are much connected from different points of view. For example, many recent papers deal with edge colorings of cubic graphs by Steiner triples systems, see \cite{MR3090714} and the references therein. Our approach seems to have in common with the study of cubic trades in Steiner triples systems  \cite{MR3624619}. 

Let $\mathbf{T} = \left( \mathcal{P}, \mathcal{B}, I \right)$ be an $\STS{(n)}$ and fix a triple $b=\{x,y,z\}\in \mathcal{B}$. Then, the meaning of Lemma \ref{lem:Db_resolv} is that $\mathbf{T}_b$ is a simple cubic graph whose edges can be colored by three colors.  Vizing's celebrated edge-coloring theorem asserts that any cubic graph can be edge-colored by three or four colors in such a way that adjacent edges receive distinct colors. While three colors are not enough to color all cubic graphs, and the corresponding decision problem is difficult \cite{MR635430}. Paramodifications of $\mathbf{T}$ correspond to edge $3$-colorings of $\mathbf{T}_b$. Let $\Gamma$ be an edge $3$-colored cubic graph. The union of two color classes is a regular subgraph of degree $2$; hence it is the disjoint union of cycles of even length. Let $\gamma=\{v_1,\ldots,v_{2m}\}$ be such a cycle. By switching the two colors in $\gamma$ we obtain a new edge $3$-coloring of $\Gamma$ which is equivalent to the original one if and only if $n=2m+1$. Recently, cycles in cubic graphs, their length and especially Hamiltonian cycles are a central and well-studied topic in graph theory, see \cites{MR3610759,MR3070075,MR2563136,MR2817782}. The authors of this paper are not aware of any results which could help to describe the structure of edge 3-colored cubic graphs, which occur as $\mathbf{T}_b$ for a Steiner triples system $\mathbf{T}$. 

We close this subsection by formulating an open problem on para-rigid Steiner triples systems. Notice that the Steiner triple system $\mathbf{T}$ is para-rigid, if the cubic graph $\mathbf{T}_b$ has a unique edge $3$-coloring for each block $b$.

\begin{problem}
Are there para-rigid Steiner triple systems?
\end{problem}

This problem could be tested on anti-Pasch (quadrilateral-free) Steiner triple systems, for which switching gives nothing. Anti-Pasch Steiner triple systems are very scarce, see \cite{MR1765934} and the references therein.

\subsection{Unitals with many translation centers}
The idea of the paramodification of Steiner 2-designs has been motivated by the following construction of Grundh\"ofer, Stroppel and Van Maldeghem \cite{MR3533345}. Our presentation restricts to the finite case. 

Let $q$ be an integer, $G$ a group of order $q^3-q$. Let $T$ be a subgroup of order $q$ such that conjugates $T^g$ and $T^h$ have trivial intersection unless they coincide (i.e., the conjugacy class $T^G$ forms a T.I. set). Assume that there is a subgroup $S$ of order $q+1$ and a collection $\mathcal{D}$ of subsets of $G$ such that 
\begin{enumerate}[(D1)]
\item each set $D  \in \mathcal{D}$ contains $1$,
\item any $D  \in \mathcal{D}$ has size $q+1$,
\item $|\mathcal{D}|=q-2$. 
\item For each $D  \in \mathcal{D}$, the map $(D \times D) \setminus \{(x,x) \mid x \in D\} \to G: (x,y) \to {xy}^{-1}$ is injective.
\end{enumerate}
Furthermore, we assume that the following property holds:
\begin{enumerate}
\item[(P)] The system consisting of $S\setminus \{ 1 \}$, all conjugates of $T \setminus \{1\}$ and all sets 
\[D^*:= \{ {xy}^{-1} \mid x, y  \in D, x\neq y \}\] 
with $D \in \mathcal{D}$ forms a partition of $G \setminus \{1\}$.
\end{enumerate}
We define an incidence structure with point set $\mathcal{P}=G \cup [\infty]$ and block set $\mathcal{B}=\mathcal{B}^\infty \cup \{[\infty]\}$, where
\[\mathcal{B}^\infty := \{ Sg \mid g  \in G \} \cup \{ T^h g \mid h, g  \in G \} \cup \{ Dg \mid D \in \mathcal{D}, g  \in G \}\]
and the block at infinity 
\[[\infty] = \{ T^h \mid h  \in G \}\]
consists of the conjugates of $T$ in $G$. We define two incidence relations $I$ and $I^\flat$. For both, $g\in G$ and $b\in \mathcal{B}^\infty$ are incident if and only if $g \in b$. Moreover, the points on the block at infinity $[\infty]$ are precisely the  conjugates of $T$. One defines the incidence between an affine block and a point at infinity in two different ways.
\begin{enumerate}[(a)]
\item Make each $T^h$ incident with each coset $T^{hg^{-1}} g = gT^h$ (and no other block in $\mathcal{B}^\infty$). This gives an incidence structure $\mathbb{U}_\mathcal{D} = (\mathcal{P}, \mathcal{B}, I)$. 
\item Make each conjugate $T^h$ incident with each coset $T^h g$ (and no other block in $\mathcal{B}^\infty$). This gives an incidence structure $\mathbb{U}_\mathcal{D}^\flat = (\mathcal{P}, \mathcal{B}, I^\flat)$. 
\end{enumerate}

Then both $\mathbb{U}_\mathcal{D}$ and $\mathbb{U}_\mathcal{D}^\flat$ are linear spaces and the following hold.

\begin{enumerate}[(i)]
\item $\mathbb{U}_\mathcal{D}$ and $\mathbb{U}_\mathcal{D}^\flat$ are $2$-$(q^3+1, q+1,1)$ designs; i.e., unitals of order $q$.
\item Via multiplication from the right on $G$ and conjugation on the point row of $[\infty]$, the group $G$ acts as a group of automorphisms
on $\mathbb{U}_\mathcal{D}$.
\item On $\mathbb{U}_\mathcal{D}$ the group G also acts by automorphisms via multiplication from the right on $G$ but trivially on the point row of $[\infty]$.
\item On the unital $\mathbb{U}_\mathcal{D}$ each conjugate of $T$ acts as a group of translations. Thus each point on the block $[\infty]$ is a translation center, and $G$ is two-transitive on $[\infty]$.
\item On the unital $\mathbb{U}_\mathcal{D}^\flat$ the group $G$ contains no translation except the trivial one.
\end{enumerate}

It is immediate that $\mathbb{U}_\mathcal{D}$ and $\mathbb{U}_\mathcal{D}^\flat$ are paramodifications. Indeed, the set
\[C([\infty]) = \{ T^h g \mid h, g  \in G \}\]
of blocks consists of right cosets of a conjugate of $T$, which are at the same time left cosets of another conjugate of $T$. With $b'=T^hg=gT^{hg} \in C([\infty])$, the two block colorings are
\[\chi(b')=T^h, \qquad \chi^\flat(b')=T^{hg}.\]

Starting with $G=SU(2,q)$, the subgroups $T,S$ and the system $\mathcal{D}$ can be chosen such that $\mathbb{U}_\mathcal{D}$ is isomorphic to the classical Hermitian unital of order $q$, and $\mathbb{U}_\mathcal{D}^\flat$ is isomorphic to Gr\"uning's unital \cite{MR895943}, embedded in Hall planes and their duals, see \cite[Section 3.1]{MR3533345}. In particular, Gr\"uning's unitals are paramodifications of the classical Hermitian unitals. 

In \cite{MR3533345}, the authors construct two more non-classical unitals $\mathbb{U}_\mathcal{E}$, $\mathbb{U}_\mathcal{E}^\flat$ of order $4$. In this case, $G=SU(2,4)\cong SL(2,4) \cong A_5$. Using a computer, Verena M\"ohler (Karlsruhe) \cite{Moehler2020_1000117988} found further non-classical unitals of the form $\mathbb{U}_\mathcal{D}$ and $\mathbb{U}_\mathcal{D}^\flat$ for $G=SL(2,8)$.

We finish this section with an observation on finite Hermitian unitals.

\begin{proposition}
Finite Hermitian unitals have no switchings, but they do have non-trivial paramodifications. 
\end{proposition}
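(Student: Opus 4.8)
The statement has two halves. For the second half—the existence of non-trivial paramodifications—I would invoke the preceding discussion: starting from $G=SU(2,q)$ with the appropriate $T$, $S$ and system $\mathcal D$, the construction yields $\mathbb U_\mathcal D\cong$ the Hermitian unital and $\mathbb U_\mathcal D^\flat\cong$ Gr\"uning's unital, which is known to be non-isomorphic to the Hermitian one (it embeds in Hall planes and their duals, whereas the Hermitian unital does not embed in any Hall plane). Since $\mathbb U_\mathcal D$ and $\mathbb U_\mathcal D^\flat$ are paramodifications of each other with respect to the block $[\infty]$ and the colorings $\chi,\chi^\flat$, and they are non-isomorphic, the Hermitian unital has a non-trivial paramodification. (Alternatively one can point to the computational results of the later sections, but the Gr\"uning argument is clean and self-contained.)

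For the first half—no switchings—the plan is to apply Proposition \ref{pr:noswitch}. A switching of a Steiner $2$-design exists only if the anti-Pasch hypothesis fails or the inequality $n\ge 2k^3-8k^2+13k-6$ holds. A Hermitian unital of order $q$ has parameters $n=q^3+1$ and $k=q+1$. First I would check the numerics: substituting $k=q+1$ into $2k^3-8k^2+13k-6$ gives a cubic in $q$ whose leading term is $2q^3$, so it exceeds $q^3+1$ for all $q\ge 2$ — indeed $2k^3-8k^2+13k-6 = 2q^3-2q^2+q+1$, and $2q^3-2q^2+q+1 > q^3+1$ is equivalent to $q^3 > 2q^2 - q$, i.e. $q^2 > 2q-1$, i.e. $(q-1)^2>0$, true for $q\ge 2$. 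So the inequality needed to forbid switchings holds as soon as the unital is anti-Pasch.

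Thus the remaining point is that the Hermitian unital of order $q$ is anti-Pasch, i.e. contains no Pasch configuration (no four blocks on six points in the quadrilateral pattern of the definition). This is a classical fact: the Hermitian unital $\mathrm{PG}(2,q^2)$-embedded as the absolute points and non-tangent lines of a unitary polarity contains no Pasch configuration — equivalently, any two of its blocks that meet, meet in a single point, and one checks using the conic/Baer-subline structure of the blocks (each block is a Baer subline, three blocks pairwise meeting in three distinct points cannot be completed to a fourth) that the $O'Nan$/Pasch configuration is excluded. I would cite the standard reference for this (the absence of $O'Nan$ configurations in the Hermitian unital is well known; a Pasch configuration is a special $O'Nan$ configuration). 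The main obstacle is therefore locating and correctly invoking this anti-Pasch property; once it is in hand, Proposition \ref{pr:noswitch} and the elementary inequality above finish the first half immediately. If one wants to avoid quoting the $O'Nan$ result, a direct argument works too: four blocks through six points $P_1,\dots,P_6$ in the Pasch pattern would, after projecting into $\mathrm{PG}(2,q^2)$, force four secant lines of the Hermitian curve in a special incidence pattern, which a short coordinate computation (or the characterization of Hermitian unitals by the absence of such configurations) rules out.
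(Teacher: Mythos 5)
Your proposal follows essentially the same route as the paper: the anti-Pasch property of the Hermitian unital (O'Nan's result) combined with Proposition \ref{pr:noswitch} rules out switchings, and the Gr\"uning unital $\mathbb{U}_\mathcal{D}^\flat$, being a non-isomorphic paramodification of the Hermitian $\mathbb{U}_\mathcal{D}$, supplies the non-trivial paramodification. Your explicit numerical check of the inequality is a welcome addition the paper omits, though note a small arithmetic slip: $2k^3-8k^2+13k-6$ with $k=q+1$ equals $2q^3-2q^2+3q+1$ (not $2q^3-2q^2+q+1$), which makes the required inequality $q^3+1<2q^3-2q^2+3q+1$ hold for all $q\ge 1$ and does not affect your conclusion.
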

\begin{proof}
As Hermitian unitals are anti-Pasch by O'Nan's result \cite[Section 3]{MR295934},  Proposition \ref{pr:noswitch} implies that finite Hermitian unitals have no switchings. However, as mentioned above, Gr\"uning's unitals are non-isomorphic paramodifications of finite Hermitian unitals.
\end{proof}

\section{Effective computation of block colorings}
\label{sec:comput}

Let $\mathbf{D}=\left( \mathcal{P}, \mathcal{B}, I \right)$ be a Steiner $2$-$(n,k,1)$ design. Let $b\in \mathcal{B}$ be a block and consider the
subsystem $\mathbf{D}_b = \left( \mathcal{P} \setminus b, C{\left( b \right)},
I \right)$. We are interested in the effective computation of all block
colorings of $\mathbf{D}_b$ to construct new Steiner 2-designs of given
parameters by paramodification. We formulate the problem in the language of
vertex colorings of simple graphs, which is known to be NP-complete in general.
However, there are methods to deal with it for certain ranges of parameters. We
compare two methods, the first one is based on clique partitions, and the other
is based on integer linear programming.

The \textit{line graph} $\Gamma=(V,E)$ of $\mathbf{D}_b$ is defined by
$V=C(b)$, and $(b_1,b_2) \in E$ if and only if $b_1$ and $b_2$ have a unique
point $P\not \in b$ in common. A straightforward consequence of
Lemma~\ref{lem:Db_resolv} is that $\Gamma$ is a $\left( k - 1
\right)^{2}$-regular simple graph. A proper block coloring $\chi
\colon C{\left( b \right)} \to b$ of the subsystem $\mathbf{D}_{b}$ is
equivalent with a proper vertex coloring of the graph $\Gamma$ using $k$
colors. We can make this equivalence more precise by using the notion of vertex
b-colorings. The latter has been introduced by Irving and Manlove \cite{MR1670155},
see also the recent survey paper \cite{MR3732606} with special emphasis on the
complexity and algorithmic aspects of computing the b-chromatic number of a
simple graph. 
\begin{definition}
Let $G=(V,E)$ be a simple graph and $\chi:V\to C$ a proper vertex coloring. The
vertex $v\in V$ is called \textit{dominant,} if for any color $c'\in
C\setminus\{\chi(v)\}$ there is a neighbor $v'$ of $v$ such that $\chi(v')=c'$.
The coloring $\chi$ is said to be a \textit{b-coloring} if there is at least
one dominant vertex in each color class. 
\end{definition}
\begin{lemma}
The map $\chi \colon C{\left( b \right)} \to b$ is a proper block coloring of
$\mathbf{D}_{b}$ if and only if it is a b-coloring of the line graph $\Gamma$
of $\mathbf{D}_{b}$.
\end{lemma}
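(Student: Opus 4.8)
The plan is to establish the two directions of the equivalence separately, translating the defining property of a proper block coloring $\chi\colon C(b)\to b$ into a statement about dominant vertices in the line graph $\Gamma$. The key structural fact, supplied by Lemma~\ref{lem:Db_resolv}, is that each point $P\in b$ determines a color class $\chi^{-1}(P)$ that, when $\chi$ is a parallelism, is a parallel class of $\mathbf{D}_b$, i.e.\ partitions $\mathcal{P}\setminus b$ into $(n-k)/(k-1)=r-1$ blocks. Dually, a block $b'\in C(b)$ has exactly $k-1$ points off $b$, and through each such point $Q$ there pass $k-1$ further blocks of $C(b)$ (one for each remaining point of $b$), all neighbours of $b'$ in $\Gamma$ carrying the $k-1$ colors of $b\setminus\{b'\cap b\}$ --- in fact one neighbour of each such color. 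This is precisely the observation that makes every vertex dominant as soon as the coloring is proper.

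First I would prove that a proper block coloring is automatically a b-coloring, and indeed that \emph{every} vertex of $\Gamma$ is dominant. Fix $b'\in C(b)$ with $\chi(b')=P_0$, say $P_0=b\cap b'$ in the trivial case but in general just $P_0\in b$. Pick any color $P\in b\setminus\{P_0\}$; I must exhibit a neighbour $b''$ of $b'$ with $\chi(b'')=P$. Choose a point $Q\in b'\setminus b$; by Lemma~\ref{lem:nr_of_colors}(iii) applied inside $\mathbf{D}_b$, the color class $\chi^{-1}(P)$ is a parallel class, so there is a unique block $b''\in C(b)$ with $Q\in b''$ and $\chi(b'')=P$. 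Since $P\neq P_0$ we have $b''\neq b'$, and $b'\cap b''\ni Q$ with $Q\notin b$, so $(b',b'')\in E$. Hence $b'$ is dominant, and since $b'$ was arbitrary, every color class contains a dominant vertex, so $\chi$ is a b-coloring. (One should note $\chi$ is proper as a vertex coloring of $\Gamma$: adjacent vertices $b_1,b_2$ share a point off $b$, hence cannot meet on $b$, so $\chi(b_1)\neq\chi(b_2)$; this is already recorded after \eqref{eq:chib}.)

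For the converse, suppose $\chi\colon V\to b$ is a b-coloring of $\Gamma$. It is in particular a proper vertex coloring, so if $b_1,b_2\in C(b)$ meet in a point off $b$ then $\chi(b_1)\neq\chi(b_2)$; the only other way two distinct blocks of $C(b)$ can meet is in a point of $b$. I must rule out that such blocks share a color, i.e.\ show each color class $\chi^{-1}(P)$ is a set of pairwise disjoint blocks, hence (by a counting argument, since $|C(b)|=k(r-1)$ colors split into $k$ classes, forcing $|\chi^{-1}(P)|=r-1$ and these cover the $n-k$ points off $b$) a parallel class. Here is where domination is used: fix $b'$ with $\chi(b')=P_0$ and a point $Q\in b'\setminus b$. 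The blocks of $C(b)$ through $Q$ are $b'$ together with, for each $P\in b\setminus\{P_0\}$, at most --- a priori --- several blocks, but all of them are neighbours of $b'$ in $\Gamma$ and all carry colors $\neq P_0$. There are exactly $k-1$ such blocks through $Q$ (namely $QP$ for each $P\in b$ other than $P_0$... more precisely the blocks $\overline{QR}$ for $R\in b\setminus\{\chi_b(b')\}$), they are pairwise adjacent, and since $\chi$ restricted to $\{b'\}\cup\{$blocks through $Q\}$ is proper and uses colors from the $k$-set $b$, the $k$ mutually adjacent blocks through $Q$ plus nothing else must receive $k$ distinct colors --- so in fact each color of $b\setminus\{P_0\}$ appears exactly once among the neighbours of $b'$ at $Q$. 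Doing this at every point $Q$ of every block shows two blocks of the same color never meet at all, which is the required properness, and the counting then upgrades it to a parallelism.

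The main obstacle I anticipate is the converse direction, specifically making rigorous the claim that domination forces the $k$ pairwise-adjacent blocks through a point $Q\notin b$ to exhaust all $k$ colors rather than merely use $\le k$ of them: one needs the b-coloring hypothesis (a dominant vertex in each class) together with a double count of the edges of $\Gamma$ at a vertex (its degree is $(k-1)^2$, split into $k-1$ ``bundles'' of $k-1$ blocks, one bundle per point off $b$) to pin down that each bundle is rainbow-colored. Once that local rainbow property is in hand, everything else is routine: properness of $\chi$ as a block coloring is immediate, and Lemma~\ref{lem:nr_of_colors} converts it to a parallelism. I would be careful to phrase the argument so that it does not secretly assume what it is proving --- the honest input is only ``proper vertex coloring with $k$ colors'' plus ``every color class has a dominant vertex,'' and the output is ``every bundle through a point off $b$ is rainbow,'' from which disjointness of monochromatic blocks follows.
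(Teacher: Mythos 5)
Your forward direction (proper block coloring $\Rightarrow$ b-coloring) is correct and is essentially the paper's argument: the $k$ blocks of $C(b)$ through a point $Q\notin b$ form a $k$-clique of $\Gamma$, and since only the $k$ colors of $b$ are available, the clique must be rainbow, so every vertex is dominant. Routing this through Lemma~\ref{lem:nr_of_colors}(iii) (color classes of a $k$-coloring are parallel classes of $\mathbf{D}_b$) is legitimate, because the codomain of $\chi$ has exactly $k=r$ elements and Lemma~\ref{lem:nr_of_colors}(i) forces all $k$ colors to be used.

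The converse is where your proposal goes wrong. You set yourself the goal of showing that, for a b-coloring, ``two blocks of the same color never meet at all,'' i.e.\ that blocks sharing a point \emph{of} $b$ must also receive distinct colors. That goal is both false and unnecessary. It is false because the trivial coloring $\chi_b\colon b'\mapsto b'\cap b$ of \eqref{eq:chib} is a proper block coloring of $\mathbf{D}_b$ (hence, by the forward direction, a b-coloring of $\Gamma$), yet each of its color classes consists of all blocks through one fixed point of $b$, so same-colored blocks meet on $b$ constantly. Consistently with this, your ``rainbow bundle'' argument cannot deliver the claimed conclusion: it only controls the colors of blocks meeting at a point $Q\notin b$, which is exactly the properness of the vertex coloring you already assumed, and says nothing about concurrences on $b$. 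The root of the confusion is the definition of the substructure $\mathbf{D}_b=(\mathcal{P}\setminus b,\,C(b),\,I)$: its point set is $\mathcal{P}\setminus b$, so two blocks of $C(b)$ intersect \emph{in $\mathbf{D}_b$} precisely when they share a point off $b$, i.e.\ precisely when they are adjacent in $\Gamma$. Hence ``proper block coloring of $\mathbf{D}_b$'' and ``proper vertex coloring of $\Gamma$'' are the same condition by definition, and the converse direction is immediate, since a b-coloring is in particular a proper vertex coloring; this is why the paper dispatches it with ``trivially.'' Your second and third paragraphs should be replaced by this one-line observation.
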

\begin{proof}
If $\chi$ is a b-coloring of $\Gamma$, then it is also a proper block coloring of $\mathbf{D}_{b}$ trivially. Let $\chi \colon C{\left( b \right)} \to b$ be a proper block coloring of
$\mathbf{D}_{b}$ using $k$ colors. We show that each block $\beta$ is a dominant vertex of $\Gamma$. Fix a point $P \in \beta \setminus b$. By
Lemma~\ref{lem:Db_resolv}, there are precisely $k$ blocks in $C{\left( b \right)}$
incident with $P$; hence these $k$ blocks (including the block
$\beta$) form a $k$-clique in $\Gamma$. Therefore the block
coloring $\chi$ must assign different colors to these $k$ blocks, which means
that every block in the clique is dominant, and the blocks are colored with $k$
different colors. 
\end{proof}
\subsection{Colorings by the set cover method} 
One way to compute all b-colorings of the graph $\Gamma$ is to find all
solutions of a set cover problem of independent sets. In fact, a color class is an independent set of size $K=\left( n - k \right) / \left( k - 1 \right)$ and the $k$ color classes of a coloring $\chi$ are pairwise disjoint. The first step is to compute the set $Y$ of independent $K$-sets of $\Gamma$. In the second step, one constructs the graph $\Gamma^*$ with vertex set $Y$ and edges $(S_1,S_2)$ with disjoint $S_1,S_2$. In the last step, we determine all cliques of size $k$ of $\Gamma^*$. Using the \texttt{GRAPE} package~\cite{GRAPE4.8} of GAP~\cite{GAP4}, this approach is easy to implement. Moreover, \texttt{GRAPE} allows the user to exploit the
automorphism group of the Steiner 2-design $\mathbf{D}$ and the automorphism
group of the graph $\Gamma$, which makes the computation quite efficient.
\subsection{Colorings by integer linear programming}
The b-coloring problem can be formulated as an integer linear programming (ILP)
problem~\cite[Section 8.4]{MR3732606}, for an exact formulation
see~\cite[Section~2]{MR3958743}. Most of the ILP solvers are optimized to find one solution to each problem. However, for our block coloring problem, we are interested in finding all solutions. Up to our knowledge, this is only possible with the MILP solver SCIP~\cite{GleixnerEtal2018OO}. 

As mentioned above, there are many ways to give the ILP formulation of a graph coloring problem. The assignment-based model~\cite[Subsection~2.2]{MR3786990} is the standard formulation of the vertex coloring problem. This formulation uses only binary variables, one for each color and one for each vertex-color pair, and the objective is to minimize
the number of used colors. Since we are only interested in $k$-colorings, this allows us
to simplify the model slightly.

There are other approaches as well, based on partial ordering, like POP and
POP2~\cite[Section~3]{MR3786990}. The idea is to introduce a partial ordering
on the union of the vertices and the color set, and encode these relations with
binary variables. The authors also provide the relation between these new
variables and the variables occurring in the standard assignment-based model.

A drawback of the ILP formulations is that, in contrast to the set cover
method, it is hard to make use of the symmetry of the underlying graph. We conclude that since \texttt{GRAPE} is very efficient in coping with symmetries of a
line graph, it is better suited to compute all paramodifications of a given
Steiner 2-design.
\section{Paramodification of unitals of orders 3 and 4}
\label{sec:unitals}

In this section we present computational results on paramodifications of known
small unitals. In this way we construct $173$ new unitals of order $3$, and
$25\,641$ new unitals of order $4$. We study the following classes of abstract
unitals of order at most $6$:

\begin{description}%
\item[Class BBT] 909 unitals of order 3 by Betten, Betten and
  Tonchev~\cite{MR1991559}.
\item[Class KRC] $4\,466$ unitals of order 3 by Kr\-\v{c}a\-di\-nac \cite{MR2194756}. 
  This class contains all abstract unitals of order $3$ with a non-trivial
  automorphism group. 722 of the \BBT unitals appear in \KRC. 
\item[Class KNP] 1777 unitals of order 4 by Kr\v{c}adinac, Naki\'c and
  Pav\v{c}evi\'c~\cite{MR2838908},
\item[Class BB] two cyclic unitals of orders 4 and 6 by Bagchi and
  Bagchi~\cite{MR1008159}. The cyclic \BB unital of order $4$ is contained in \KNP, as well.
\end{description}

We access the libraries of small unitals and carry out the computations using
the GAP package \texttt{UnitalSZ} \cite{UnitalSZ0.6}. If $\mathbf{D}$ is a \BB
unital of order $6$, then $\mathbf{D}_b$ has a unique block coloring for each
block $b$; that is, paramodification gives no new unitals of order $6$.

The \emph{paramodification graph} $\Psi_q$ for a given order $q$ consists of
one vertex for each equivalence class of unitals of order $q$ and with edges
between two vertices whenever one can get from one equivalence class to the
other via a paramodification. As paramodifications are reversible, we may
consider undirected graphs. The connected components of the paramodification
graph are called \emph{paramodification classes}. Paramodification graphs are defined analogously to \textit{switching graphs} in~\cite{MR2854808}. 

We carried out computations to determine the paramodification classes of
$\Psi_3$ and $\Psi_4$, containing at least one unital from the classes \BBT,
\KRC or \KNP.  For the case of order $3$, we found all such classes, resulting
$173$ new unitals of order $3$. This subgraph of $\Psi_3$ is complete in the
sense that all paramodifications of all vertices are known, see Table
\ref{tbl:comps}.

Consider the switching graph on the unitals from the classes \BBT, \KRC, and the
newly found 173 paramodifications of them. As switches are special cases of
paramodifications, this switching graph is a subgraph of the graph mentioned
above. By restricting the type of transformations to switches, we lose 623 edges
between the unitals in contrast to paramodifications, and only 131 of the new
173 unitals are reachable via switching. In the paramodification subgraph, there
are 3182 isolated vertices according to Table~\ref{tbl:comps}; in the switching
graph, this number is 3525.

In the case of order $4$, out of the $1\,777$ unitals of \KNP, $1\,458$ turn
out to be isolated vertices of $\Psi_4$. By repeating the paramodification
step, we produced $25\,641$ new unitals of order $4$. However, the graph is
incomplete as it has unfinished vertices; these are unitals whose
paramodifications have not been computed yet. Not counting the isolated
vertices, the number of complete paramodification classes is $142$. The remaining
$6$ classes are all incomplete, with $12\,610$ unfinished vertices in total.
Concerning the growth of the connected components, it is hard to say
anything mathematically reasonable. The largest component with $7\,596$ known
vertices has $8$ vertices of \KNP type, and its growth in the breadth-first
search is \[8,\quad 45,\quad 425,\quad 7118,\quad ???\]

In Table \ref{tbl:runtimes}, we present the comparison of run-times of different
algorithms for the computation of $(\chi,b)$-paramodifications. The reader can
find further scientific data on the paramodification of unitals on the web page
\url{https://davidmezofi.github.io/unitals/}.

\begin{table}
\caption{Distribution of the sizes of the paramodification classes\label{tbl:comps}}
\begin{tabular}{rrr}
\toprule
size of class & nr of classes in $\Psi_3$  & nr of classes in $\Psi_4$ \\
\midrule
isolated vertex & 3\,182 & 1\,458\\
2--5 & 466 & 99\\
6--10 & 35 & 13\\
11--100 & 13 & 16\\
101-- 1\,000 && 14\\
1\,342 && $1^*$\\
1\,478 && $1^*$\\
2\,557 && $1^*$\\
3\,487 && $1^*$\\
4\,035 && $1^*$\\
7\,596 && $1^*$\\
\bottomrule
\end{tabular}
\end{table}

\begin{table}
\caption{Mean and maximal run-times of different methods in milliseconds of 30
random KNP unitals and a random block\label{tbl:runtimes}}
\begin{tabular}{lrr}
\toprule
method & mean & maximum\\
\midrule
set cover (GAP) & 142 & 316\\
assignment (SCIP) & 3369 & 9804\\
POP (SCIP) & 4082 & 12266\\
POP2 (SCIP) & 4444 & 14707\\
\bottomrule
\end{tabular}
\end{table}

\printbibliography

\end{document}